\newcommand\cF{{\mathcal F}}
\newcommand\cG{{\mathcal G}}
\newcommand\cP{{\mathcal P}}
\theoremstyle{plain}
\newtheorem{theorem}{Theorem}[section]
\newtheorem{lemma}[theorem]{Lemma}
\theoremstyle{definition}
\newtheorem{defn}[theorem]{Definition}
\newcommand\lref[1]{Lemma~\ref{lem:#1}}
\newcommand\tref[1]{Theorem~\ref{thm:#1}}
\newcommand\cref[1]{Corollary~\ref{cor:#1}}
\newcommand\dref[1]{Definition~\ref{def:#1}}
\title{A note on tilted Sperner families with patterns}
\author{D\'aniel Gerbner\thanks{MTA Alfr\'ed R\'enyi Institute of Mathematics, P.O.B. 127, Budapest H-1364, Hungary.  Email: gerbner.daniel@renyi.mta.hu Research supported by OTKA grant PD-109537.} \and M\'at\'e Vizer\thanks{MTA Alfr\'ed R\'enyi Institute of Mathematics, P.O.B. 127, Budapest H-1364, Hungary. Email: vizermate@gmail.com Research supported by OTKA grant SNN--116095.}}
\begin{document}
\maketitle

\begin{abstract}

Let $p$ and $q$ be two nonnegative integers with $p+q>0$ and $n>0$. We call $\cF \subset \cP([n])$ a \textit{(p,q)-tilted Sperner family with patterns} on [n] if there are no distinct $F,G \in \cF$ with:
$$(i) \ \ p|F \setminus G|=q|G \setminus F|, \ \textrm{and}$$
$$(ii) \ f > g  \ \textrm{for all} \ f \in F \setminus G \ \textrm{and} \ g \in G \setminus F.$$

\vspace{3mm}

E. Long in \cite{L} proved that the cardinality of a (1,2)-tilted Sperner family with patterns on $[n]$ is $$O(e^{120\sqrt{\log n}}\ \frac{2^n}{\sqrt{n}}).$$

We improve and generalize this result, and prove that the cardinality of every ($p,q$)-tilted Sperner family with patterns on [$n$] is $$O(\sqrt{\log n} \ \frac{2^n}{\sqrt{n}}).$$

\end{abstract}

\textit{Keywords: Sperner family, tilted Sperner family, permutation method}

\medskip

\section{Introduction}

A family $\cF$ of subsets of $[n]$ (where for $n>0$ we will use the $[n]$ notation for $\{1,2,...,n\}$ and $\cP([n])$ for the power set) is called a \textit{Sperner family} if $F \not \subset G$ for all distinct $F,G \in \cF$. A classic result in extremal combinatorics is Sperner's theorem \cite{S}, which states that the maximal cardinality of a Sperner family is $\binom{n}{\lfloor \frac{n}{2} \rfloor}$. This result has a huge impact on combinatorics and has many generalizations (see e.g. \cite{E}).

Recently Sperner's theorem played some role in the Polymath project to discover a new proof of the density Hales-Jewett theorem \cite{P}. Motivated by its role in the proof Kalai asked whether one can achieve 'Sperner-like theorems' for 'Sperner like families' \cite{LL}.

One direction to generalize the notion of Sperner families is the so called \textit{tilted Sperner families} (see \dref{1}). As written in \cite{LL}: Kalai noted that the 'no containment' condition can be
rephrased as follows: $\cF$ does not contain two sets $F$ and $G$ such that, in the unique subcube of $\cP([n])$
spanned by $F$ and $G$, the bottom point is $F$ and $G$ is the top point. He asked: what happens if we
forbid $F$ and $G$ to be at a different position in this subcube? In particular, he asked how large $\cF \subset \cP([n])$
can be if we forbid $F$ and $G$ to be at a ‘fixed ratio’ $p : q$ in this subcube. That is, we forbid $F$ to be
$p/(p+q)$ of the way up this subcube and $G$ to be $q/(p+q)$ of the way up this subcube. Equivalently we can say:

\begin{defn}
\label{def:1}
Let $p,q$ be two nonnegative integers. We call $\cF \subseteq \cP([n])$  a \textit{(p,q)-tilted Sperner family} if for all distinct $F,G \in \cF$ we have $$p|F \setminus G| \not = q|G \setminus F|.$$

\end{defn}

Note that we can restrict ourselves to coprime $p$ and $q$. Also note the a Sperner family is just a $(1,0)$-tilted Sperner family. In \cite{LL} Leader and Long proved the following theorem, which gives an asymptotically tight answer for the maximal cardinality of a \textit{(p,q)}-tilted Sperner family:

\begin{theorem}
Let $p,q$ be coprime nonnegative integers with $q \ge p$. Suppose $\cF \subset \cP([n])$ is a $(p,q)$-tilted Sperner family. Then
$$|\cF| \le (q-p+o(1)) \binom{n}{\lfloor \frac{n}{2} \rfloor}.$$

\end{theorem}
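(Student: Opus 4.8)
The plan is to combine a residue-class decomposition of $\cF$ with a permutation (LYM-type) argument built from suitable maximal cliques in the ``forbidden'' graph $H$ on $\cP([n])$, whose edges are exactly the forbidden pairs. \emph{Reduction.} Since $\gcd(p,q)=1$, for distinct $F,G$ the equality $p|F\setminus G|=q|G\setminus F|$ forces $|F\setminus G|=qk$ and $|G\setminus F|=pk$ for some integer $k\ge 1$, so that $|F|-|G|=(q-p)k$. Two cases are immediate: if $p=0$ then $q=1$ and the condition is containment, so Sperner's theorem gives the bound; if $p=q=1$ the condition reads $|F|=|G|$, so $\cF$ has at most one set of each size and $|\cF|\le n+1=o\!\big(\binom{n}{\lfloor n/2\rfloor}\big)$. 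So assume $q>p\ge 1$. The identity $|F|-|G|=(q-p)k$ shows every forbidden pair satisfies $|F|\equiv|G|\ (\mathrm{mod}\ q-p)$; hence the subfamilies $\cF^{(r)}=\{F\in\cF:|F|\equiv r\ (\mathrm{mod}\ q-p)\}$ never interact, and it suffices to prove $|\cF^{(r)}|\le(1+o(1))\binom{n}{\lfloor n/2\rfloor}$ for each of the $q-p$ classes.

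\emph{Tilted chains.} Fix a class $r$, whose relevant sizes $m_0<m_1<\cdots$ form an arithmetic progression with common difference $q-p$. I would construct a ``tilted chain'' $S_0,S_1,\dots$ with $|S_j|=m_j$ as follows: preload pairwise disjoint $p$-sets $Y_0,Y_1,\dots$ inside the base $S_0$, reserve pairwise disjoint fresh $q$-sets $X_0,X_1,\dots$ outside $S_0$, and set $S_{j+1}=(S_j\setminus Y_j)\cup X_j$. Since every added element is fresh and never later removed, while every removed element lies in $S_0$ and is removed only once, one checks that for all $i<i'$ one has $S_{i'}\setminus S_i=X_i\cup\cdots\cup X_{i'-1}$ and $S_i\setminus S_{i'}=Y_i\cup\cdots\cup Y_{i'-1}$, of sizes $q(i'-i)$ and $p(i'-i)$. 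Thus \emph{every} pair on the chain is a forbidden pair, i.e.\ a tilted chain is a clique in $H$, so $\cF$ contains at most one of its members.

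\emph{Counting.} Let $B$ be the central band of sizes within $C\sqrt{n\log n}$ of $n/2$; the standard binomial tail estimate gives $\sum_{m\notin B}\binom{n}{m}=o\!\big(\binom{n}{\lfloor n/2\rfloor}\big)$, so members of $\cF$ outside $B$ are negligible. For each class $r$ I would take a single tilted chain whose sizes are exactly the class-$r$ elements of $B$; this configuration uses $m_0+qT$ ground elements and needs $m_0\ge pT$, where $T+1$ is the number of such sizes, and since $B$ has width $o(n)$ both constraints hold once $n$ is large. Applying a uniformly random permutation of $[n]$ to a fixed such configuration yields a random tilted chain $\cC$ whose $j$-th set is uniform over $\binom{[n]}{m_j}$. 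As $|\cF\cap\cC|\le 1$ pointwise, taking expectations gives $\sum_{m\in B,\ m\equiv r\,(\mathrm{mod}\ q-p)}|\cF_m|/\binom{n}{m}\le 1$, where $\cF_m=\{F\in\cF:|F|=m\}$. Summing over the $q-p$ classes and using $\binom{n}{m}\le\binom{n}{\lfloor n/2\rfloor}$ gives $\sum_{m\in B}|\cF_m|\le(q-p)\binom{n}{\lfloor n/2\rfloor}$; adding the negligible tail yields the claimed bound.

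The main obstacle is the simultaneity required in the counting step. The constant is tight only because \emph{all} band sizes of a fixed residue class can be placed on one tilted chain, so that the right-hand side of the averaged inequality is exactly $1$ per class — matching the extremal example of $q-p$ consecutive middle layers. Verifying that such a long clique exists, i.e.\ that the disjointness and preloading constraints $m_0\ge pT$ and $m_0+qT\le n$ can be satisfied across the entire band, is the technical heart, and it is precisely what forces the restriction to a band of width $o(n)$.
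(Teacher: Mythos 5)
Your proposal is correct, and it is worth noting at the outset that this statement is only quoted in the note from Leader and Long \cite{LL} --- the note contains no proof of it --- so the only in-paper point of comparison is the method used for \tref{gv}. Checking your argument on its own terms: the reduction is sound, since $\gcd(p,q)=1$ forces $|F\setminus G|=qk$ and $|G\setminus F|=pk$, hence $|F|\equiv|G| \pmod{q-p}$ in either orientation of a forbidden pair, so the residue classes $\cF^{(r)}$ genuinely do not interact, and the degenerate cases $(p,q)=(0,1)$ (Sperner's theorem) and $(1,1)$ (all sizes distinct, $|\cF|\le n+1$) are dispatched correctly. Your tilted chain really is a clique in the forbidden graph: with $S_{j+1}=(S_j\setminus Y_j)\cup X_j$, the identities $S_{i'}\setminus S_i=X_i\cup\cdots\cup X_{i'-1}$ and $S_i\setminus S_{i'}=Y_i\cup\cdots\cup Y_{i'-1}$ hold because each $X_j$ is fresh and never removed while each $Y_j$ lies in $S_0$ and is removed exactly once, so every pair on the chain satisfies $p|S_{i'}\setminus S_i|=q|S_i\setminus S_{i'}|$. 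The feasibility constraints $m_0\ge pT$ and $m_0+qT\le n$ are comfortably met on a band of width $O(\sqrt{n\log n})$ since $T=O(\sqrt{n\log n})$, the $S_n$-averaging gives the LYM-type inequality $\sum_j |\cF_{m_j}|/\binom{n}{m_j}\le 1$ per class (using that $\sigma(S_j)$ is uniform on $\binom{[n]}{m_j}$ and that at most one chain member lies in $\cF$ pointwise), and the Chernoff tail outside the band is $O(2^n/n^2)=o\big(\binom{n}{\lfloor n/2\rfloor}\big)$. Methodologically, your proof and the note's proof of \tref{gv} belong to the same Katona-style family: both first restrict to a central band via Chernoff--Hoeffding and then average a forbidden configuration over a permutation group. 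The instructive difference is why your simpler version works here but not for the patterned problem: the plain tilted condition depends only on the cardinalities $|F\setminus G|$ and $|G\setminus F|$ and so is invariant under all of $S_n$, which lets you average one long chain over the full symmetric group and extract the sharp constant $q-p$; the pattern condition $(ii)$ destroys this invariance, which is exactly why the note must introduce $(p,q)$-cut points and work with the two-part group $S_x\times S_{n-x}$, at the cost of the extra $\sqrt{\log n}$ factor and with $q$ forbidden arithmetic-progression configurations per cut point in place of your single chain per residue class.
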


Note that up to the $o(1)$ term, this is the best possible, since the union of $p-q$ consecutive levels is a $(p,q)$-tilted Sperner family.

In \cite{L} Long started to investigate the cardinality of \textit{tilted Sperner families with patterns} (see \dref{2}), which was also asked by Kalai (\cite{Le}).

\begin{defn}
\label{def:2}
Let $p$ and $q$ be nonnegative integers with $p+q>0$. We call $\cF$ a \textit{(p,q)-tilted Sperner family with patterns}, if there are no distinct $F,G \in \cF$ with:

\vspace{3mm}

$(i)$ $p|F \setminus G|=q|G \setminus F|$, and

\vspace{2mm}

$(ii)$ $f > g$ for all $f \in F \setminus G$ and $g \in G \setminus F$.

\vspace{3mm}

\end{defn}

In \cite{L} he gave an upper bound on the cardinality of a (1,2)-tilted Sperner family with patterns:

\begin{theorem}(\cite{L}, Theorem 1.3)
\label{thm:Long}
Let $\cF \subset \cP([n])$ be a (1,2)-tilted Sperner family with patterns. Then
$$|\cF| \le O(e^{120\sqrt{\log n}} \ \frac{2^n}{\sqrt{n}}).$$

\end{theorem}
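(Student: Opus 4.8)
The plan is to translate condition (ii) into a clean ``at most one set per chain'' statement in an auxiliary order, and then run the permutation (Lubell--Yamamoto--Meshalkin) method against these chains, paying a subpolynomial price in order to reach sets of every size.

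\textbf{Key reduction.} Fix a threshold $t \in \{0,1,\dots,n\}$ and split each $A \subseteq [n]$ as $A = (A \cap [1,t]) \cup (A \cap (t,n])$. Define an order $\le_t$ on $\cP([n])$ by declaring $A \le_t B$ when $A \cap (t,n] \subseteq B \cap (t,n]$ and $A \cap [1,t] \supseteq B \cap [1,t]$. Call a $\le_t$-chain a \emph{$(2,1)$-chain} if consecutive members differ by adjoining two elements of $(t,n]$ and deleting one element of $[1,t]$. The crucial observation is that any two sets on a single $(2,1)$-chain form a forbidden pair: if $F$ sits $k$ steps above $G$, then $F \setminus G$ consists of $2k$ elements of $(t,n]$ and $G \setminus F$ of $k$ elements of $[1,t]$, so $|F \setminus G| = 2|G \setminus F|$, giving (i), while every element of $F \setminus G$ exceeds $t$ and every element of $G \setminus F$ is at most $t$, giving (ii). Hence $\cF$ meets every $(2,1)$-chain in at most one set. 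Equivalently, the weighted size $\Psi_t(A) = |A \cap (t,n]| + 2|A \cap [1,t]|$ is constant along each such chain, and this is exactly the quantity a forbidden pair must share.

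\textbf{Counting scheme.} For fixed $t$ the maximal $(2,1)$-chains foliate the slice $\{A : \Psi_t(A) = 2t\}$: a set at chain-position $s$ has $|A \cap (t,n]| = 2s$ and $|A \cap [1,t]| = t - s$, so there are $N_s = \binom{n-t}{2s}\binom{t}{s}$ of them. By the evident $S_{n-t} \times S_t$ symmetry a uniformly random maximal $(2,1)$-chain hits each level-$s$ set with the same probability $1/N_s$, and since it meets $\cF$ at most once we get the LYM-type bound $\sum_{F} 1/N_{s(F)} \le 1$, the sum over $F \in \cF$ on the slice. Before invoking this, one discards the sets of atypical size: those $F$ with $\bigl||F|-n/2\bigr| > C\sqrt{n\log n}$ number $O(2^n n^{1-2C^2}) = o(2^n/\sqrt n)$ and are negligible. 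Each remaining $F$ must be routed to a threshold $t = t(F)$ placing it in the interior of a long chain, namely the $t$ with $\Psi_t(F) = 2t$; a discrete intermediate-value argument shows such a $t$ always exists.

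\textbf{The hitting estimate and the main obstacle.} The naive consequence of the slice inequality, bounding each slice by its largest layer and summing over the $\Theta(n)$ thresholds, loses a factor of order $\sqrt n$, since $\max_s N_s \approx 2^n/n$ whereas the target is $2^n/\sqrt n$. Removing this polynomial loss is the heart of the matter and the step I expect to be hardest. Rather than charging each $F$ to a single threshold, one exploits that $F$ lies on long $(2,1)$-chains for \emph{every} $t$ in a whole band, of width comparable to the fluctuation of the increasing function $t \mapsto |F \cap [1,t]|$, and one sums the hitting probabilities $1/N_{s(F)}$ across this band to obtain a much larger total for each $F$. Carrying this out requires an anticoncentration / local-limit estimate for the asymmetric $(2,1)$-walk, whose step distribution is skewed so that the symmetric-chain intuition of ordinary Sperner theory does not transfer directly, together with a parity analysis handling the requirement that $|F \cap (t,n]|$ be even. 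Balancing the width of the band against the concentration window is precisely what converts the crude $\sqrt n$ loss into a factor $e^{O(\sqrt{\log n})}$; optimizing the constants there yields a bound of the form $O(e^{120\sqrt{\log n}}\,2^n/\sqrt n)$, and assembling the per-$F$ estimates into the final inequality is then routine.
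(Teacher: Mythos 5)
Your setup is sound, and it is in fact the skeleton of the method this paper uses to prove its stronger Theorem~\ref{thm:gv} (which implies the stated bound): your maximal $(2,1)$-chains are exactly the images $\pi(C(x,k))$ under the $S_x\times S_{n-x}$ action, and your slice inequality $\sum_F 1/N_{s(F)}\le 1$ is equation~(\ref{eq:bin}). But the step you yourself flag as ``the heart of the matter'' is not a proof --- it is a plan. You assert that an anticoncentration/local-limit estimate for the asymmetric $(2,1)$-walk, a parity analysis, and a band-summing optimization ``yield a bound of the form $O(e^{120\sqrt{\log n}}\,2^n/\sqrt n)$,'' but none of this is carried out, and the constant $120$ is not derived from anything; as written, the argument proves only the trivial $O(2^n/\sqrt n \cdot \sqrt n)$-type bound you correctly identify as insufficient. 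A secondary but genuine flaw: your ``discrete intermediate-value argument'' for the existence of $t$ with $\Psi_t(F)=2t$ fails, since the function $t\mapsto 2t-\Psi_t(F)$ increases by $2$ at each step $t+1\notin F$ and can therefore skip the value $0$; you need slack, as in the paper's cut-point definition (the half-open window of width $1/p$ in~(\ref{eq:pq}) and the floor in $j(x,k)$), or equivalently you must allow $\Psi_t(F)\in\{2t,2t+1\}$.

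The missing idea is that no local-limit analysis is needed at all, because the $\sqrt n$ loss comes from your pruning step being too weak: you discard only sets with atypical \emph{total} size $\bigl||F|-n/2\bigr|$, which leaves the routed threshold $t(F)$ free to range over $\Theta(n)$ values (a set whose elements are clumped can have $t(F)$ far from $n/3$). Prune instead by \emph{all prefix sums}: let $\cG$ be the family of $G$ for which some $x\in[n]$ has $\bigl||[x]\cap G|-x/2\bigr|>\sqrt{n\log n}$; Chernoff--Hoeffding plus a union bound over $x$ gives $|\cG|=O(2^n/n)$ (Lemma~\ref{lem:cherno}). For $F\notin\cG$, the prefix control at $x=t$ and $x=n$ forces the solution of your threshold equation to satisfy $|t(F)-n/3|=O(\sqrt{n\log n})$ (Lemma~\ref{lem:cher}), so only $O(\sqrt{n\log n})$ thresholds are ever used. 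Per threshold, the crude bound $\max_s N_s\le \frac{2^{t}}{\sqrt t}\cdot\frac{2^{n-t}}{\sqrt{n-t}}=O(2^n/n)$ already suffices, and summing gives $O(\sqrt{\log n}\;2^n/\sqrt n)$ --- stronger than the $e^{120\sqrt{\log n}}$ bound you were aiming for. In short: the hard analytic machinery you postponed is unnecessary, and without either it or the stronger pruning, your argument has a hole exactly where the theorem lives.
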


Actually in \cite{L} he gives a proof of a weaker result with the density Hales-Jewett theorem, and proves \tref{Long} with a randomized generalization of Katona's cycle method (see \cite{K}).

\vspace{2mm}

In this note we generalize and improve his result by applying another generalization of Katona's cycle method, the so called permutation method. We will apply the permutation method in a somewhat similar way like the authors of \cite{EFK} and prove the following:

\begin{theorem}
\label{thm:gv}

Let p and q be non negative integers with $p+q >0$ and let $\cF$ be a (p,q)-tilted Sperner family with patterns. Then

$$|\cF| \le O(\sqrt{\log n} \ \frac{2^n}{\sqrt{n}}).$$

\end{theorem}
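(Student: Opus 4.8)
The plan is to run an LYM-type permutation argument on a fixed bipartition of the ground set into a ``low'' and a ``high'' half, exploiting that condition $(ii)$ is a threshold condition for the natural order on $[n]$. First I would dispose of the degenerate cases: if $p=0$ (so $q>0$) then $(i)$ forces $G\subseteq F$ while $(ii)$ is vacuous, so a $(0,q)$-family is exactly an antichain, and symmetrically for $q=0$; in either case Sperner's theorem already gives the stronger bound $\binom{n}{\lfloor n/2\rfloor}=O(2^n/\sqrt n)$. Hence I may assume $p,q\ge 1$, and after dividing by $\gcd(p,q)$ that they are coprime. Now fix $t=\lfloor n/2\rfloor$, split $[n]=[1,t]\cup(t,n]$, and for $F\subseteq[n]$ write $F^-=F\cap[1,t]$ and $F^+=F\cap(t,n]$. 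The method chooses two \emph{independent} uniformly random permutations $\sigma$ of $[1,t]$ and $\tau$ of $(t,n]$, and for $0\le i\le t$, $0\le j\le n-t$ sets $P_{i,j}=\{\sigma_1,\dots,\sigma_i\}\cup\{\tau_1,\dots,\tau_j\}$.

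The key observation is that the natural order makes the forbidden pairs appear along oblique lines of the $(i,j)$-grid. I claim that for each integer $c$ the family $\{P_{i,j}:qi+pj=c\}$ is pairwise forbidden. Indeed, if $qi+pj=qi'+pj'$ with $i>i'$ then $q(i-i')=p(j'-j)>0$, so $j<j'$; setting $F=P_{i',j'}$ and $G=P_{i,j}$ gives $G\setminus F=\{\sigma_{i'+1},\dots,\sigma_i\}\subseteq[1,t]$ and $F\setminus G=\{\tau_{j+1},\dots,\tau_{j'}\}\subseteq(t,n]$, so $(ii)$ holds because every element of $(t,n]$ exceeds every element of $[1,t]$, while $(i)$ is precisely the line equation $q(i-i')=p(j'-j)$. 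Hence $\cF$ contains at most one $P_{i,j}$ on each line. Since $\Pr[P_{i,j}=F]=1/\bigl(\binom{t}{i}\binom{n-t}{j}\bigr)$ when $|F^-|=i$ and $|F^+|=j$ (and $0$ otherwise), taking expectations yields for every $c$ the local LYM inequality
$$\sum_{F\in\cF:\ q|F^-|+p|F^+|=c}\frac{1}{\binom{t}{|F^-|}\binom{n-t}{|F^+|}}\le 1,$$
and therefore $|\cF^{(c)}|\le \max_{qi+pj=c}\binom{t}{i}\binom{n-t}{j}$, where $\cF^{(c)}$ denotes the set of $F\in\cF$ with $q|F^-|+p|F^+|=c$.

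Because the value $c=q|F^-|+p|F^+|$ partitions $\cF$, summing over $c$ gives
$$|\cF|\le \sum_{c}\ \max_{qi+pj=c}\binom{t}{i}\binom{n-t}{j},$$
and the whole theorem reduces to estimating this sum; this is the step I expect to be the main obstacle. The numbers $\binom{t}{i}\binom{n-t}{j}$ form a two-dimensional binomial distribution of total mass $2^n$, the statistic $c=qi+pj$ has mean $c_0=qt/2+p(n-t)/2$ and standard deviation $\Theta(\sqrt n)$, and each single term is at most the global maximum $\binom{t}{\lfloor t/2\rfloor}\binom{n-t}{\lfloor(n-t)/2\rfloor}=O(2^n/n)$. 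The plan is to bound the $O(\sqrt{n\log n})$ central lines $|c-c_0|=O(\sqrt{n\log n})$ (of which, $p,q$ being coprime, there are $O(\sqrt{n\log n})$ integer values) each crudely by this global maximum, contributing $\sqrt{n\log n}\cdot O(2^n/n)=O(\sqrt{\log n}\,2^n/\sqrt n)$, and to control every remaining line by the Chernoff / local central limit tail, since there the per-line maximum decays like $(2^n/n)\,e^{-\Omega((c-c_0)^2/n)}$ and the total tail contribution is $O(2^n/\sqrt n)$. Making these binomial estimates along the oblique sections rigorous is the technical heart of the argument, while the conceptual point is the clique identification of the second paragraph, which converts the incomparable forbidden pairs into the at-most-one-per-line condition that the permutation method can exploit; note that the same scheme in fact yields the $O(2^n/\sqrt n)$ bound for coprime $p=q=1$, so the extra $\sqrt{\log n}$ is the price of the crude central estimate in the general case.
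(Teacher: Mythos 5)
Your proposal is correct, and it takes a genuinely different route from the paper's proof. The paper never fixes the split at $\lfloor n/2\rfloor$: it assigns to each $F\in\cF$ a \emph{$(p,q)$-cut point} $x_F$ (a position $x$ where $\frac{1}{p}|F\cap[x]|$ approximately equals $\frac{1}{q}$ times the number of non-elements of $F$ above $x$, shown to exist by a discrete intermediate-value argument), partitions $\cF$ into the classes $\cF_x=\{F:x_F=x\}$, and runs the permutation method separately for each $x$ with the group $S_x\times S_{n-x}$ and canonical sets $C(x,k)$, obtaining the per-class LYM inequality (\ref{eq:bin}) and hence $|\cF_x|=O\bigl(2^n/\sqrt{x(n-x)}\bigr)$. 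Its probabilistic input is a Chernoff--Hoeffding bound applied to the \emph{family}: all but $O(2^n/n)$ sets have every prefix $[x]$ nearly half-full, which forces $x_F$ to lie within $O(\sqrt{n\log n})$ of $\frac{p}{p+q}n$, so only that many classes matter. You instead keep a single split $t=\lfloor n/2\rfloor$ and decompose $\cF$ along the oblique lines $q|F^-|+p|F^+|=c$; your per-line LYM inequality plays the role of (\ref{eq:bin}), and the probabilistic work moves from the family to the binomial coefficients. Both are instances of the two-part permutation method of \cite{EFK}, but the decompositions are genuinely different, and your deferred estimates are standard and do close: the $O(\sqrt{n\log n})$ central lines are each at most the global maximum $\binom{t}{\lfloor t/2\rfloor}\binom{n-t}{\lfloor (n-t)/2\rfloor}=O(2^n/n)$, while for tail lines Hoeffding's bound $\binom{t}{i}\binom{n-t}{j}\le O(2^n)\,e^{-2(i-t/2)^2/t-2(j-(n-t)/2)^2/(n-t)}$, together with the observation that $qi+pj=c$ forces $\max(|i-t/2|,|j-(n-t)/2|)\ge |c-c_0|/(p+q)$ (where $c_0=qt/2+p(n-t)/2$ is your central value), gives a Gaussian tail whose total is $O(2^n/\sqrt n)$ once the threshold constant is taken large relative to $p+q$.

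In fact your scheme proves more than the stated theorem. The sharper, still standard, pointwise estimate $\binom{t}{i}\le O(1)\binom{t}{\lfloor t/2\rfloor}e^{-\Omega((i-t/2)^2/t)}$ (valid for all $0\le i\le t$ by the usual ratio computation) upgrades the per-line bound to $\max_{qi+pj=c}\binom{t}{i}\binom{n-t}{j}\le O(2^n/n)\,e^{-\Omega_{p,q}((c-c_0)^2/n)}$ on \emph{every} line, and summing this over all integers $c$ gives $|\cF|=O_{p,q}(2^n/\sqrt{n})$ with no $\sqrt{\log n}$ factor at all; your closing remark about $p=q=1$ actually holds verbatim for all coprime $p,q$. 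The $\sqrt{\log n}$ loss in your write-up (as in the paper) is purely an artifact of bounding the central lines crudely by the global maximum, so your approach not only recovers the theorem but improves it, settling the upper-bound side of the conjecture in the concluding remarks.
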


The paper is organized as follows: in Section 2 we prove our main theorem and in Section 3 we pose some questions.

\section{Proof of \tref{gv}}

\begin{proof}

If either $p$ or $q$ is zero, then we get back the usual Sperner family for which we know that the statement is true. In the following we fix $p,q>0$ and furthermore we assume that $p\le q$. The proof works similarly in case $p >q$.

\subsection{The $(p,q)$-cut point}

First we introduce a notion that will have crucial role in the proof.

\begin{defn}

We say that $x \in [n]$ is a \textit{(p,q)-cut point of} $A \subseteq [n]$, if

\begin{equation}
\label{eq:pq}
0 \le \frac{n-x-|([n]\setminus[x])\cap A|}{q}-\frac{|A\cap[x]|}{p} < \frac{1}{p}.
\end{equation}

We remark that $x$ is a $(p,q)$-cut point means that $\frac{p}{q}$ times the number of points of $A$ less than $x$ is 'approximately' equal to the number of points not belonging to $A$ that are larger than $x$.

\end{defn}

\begin{lemma}
\label{lem:cutpoint}
Every $A \subseteq [n]$ has a (p,q)-cut point.

\end{lemma}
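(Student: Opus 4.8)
The plan is to treat the middle term of \eqref{eq:pq} as a function of the cut position and run a discrete intermediate value argument. For $x \in \{0,1,\dots,n\}$ set
\[
h(x) = \frac{n-x-|([n]\setminus[x])\cap A|}{q}-\frac{|A\cap[x]|}{p},
\]
so that $x$ is a $(p,q)$-cut point of $A$ precisely when $0 \le h(x) < 1/p$. At the two ends one computes directly that $h(0) = (n-|A|)/q \ge 0$ and $h(n) = -|A|/p \le 0$, so $h$ starts out nonnegative and finishes nonpositive; the whole task is to show that as $h$ decreases from a nonnegative value to a nonpositive one it must land in the half-open window $[0,1/p)$.

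First I would record how $h$ behaves under $x \mapsto x+1$. Writing $a(x) = |A\cap[x]|$ for the number of elements of $A$ that are at most $x$, and $b(x) = n-x-|([n]\setminus[x])\cap A|$ for the number of non-elements of $A$ that exceed $x$, a one-line case check shows that if $x+1 \in A$ then $a$ increases by $1$ while $b$ is unchanged, whereas if $x+1 \notin A$ then $a$ is unchanged while $b$ decreases by $1$. Hence $h(x+1)-h(x)$ equals $-1/p$ in the first case and $-1/q$ in the second. In particular $h$ is strictly decreasing, and, crucially, since we have assumed $p \le q$, every downward step has size at most $1/p$, which is exactly the width of the target window $[0,1/p)$.

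With these two facts the conclusion is immediate. Let $x^*$ be the largest index in $\{0,1,\dots,n\}$ with $h(x^*) \ge 0$; this is well defined because $h(0) \ge 0$. If $x^* = n$ then $h(n)$ is both $\ge 0$ and $\le 0$, so $0 = h(n) < 1/p$; otherwise $h(x^*) \ge 0 > h(x^*+1)$, and since the jump from $x^*$ to $x^*+1$ has size at most $1/p$ we get $h(x^*) < h(x^*+1) + 1/p < 1/p$. Either way $0 \le h(x^*) < 1/p$, so $x^*$ is a $(p,q)$-cut point (in the degenerate case $A=[n]$ this point is $x^*=0$). The only real content is the step computation together with the observation that bounding each step by $1/p$ forces the last nonnegative value of $h$ to fall inside the window; this is the single place where the hypothesis $p \le q$ is used, and it is the step I would be most careful to get right.
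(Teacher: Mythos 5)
Your proof is correct and takes essentially the same route as the paper: the paper tracks the two functions $f(A,u)=|A\cap[u]|/p$ and $g(A,u)=\bigl(n-u-|([n]\setminus[u])\cap A|\bigr)/q$ and runs the same discrete intermediate-value argument, where your $h$ is exactly $g-f$, with the same step computation and the same use of $p\le q$ to bound every step by $1/p$. Your extraction of the last index with $h(x^*)\ge 0$ simply makes explicit the concluding step that the paper leaves implicit.
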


\begin{proof} Let us introduce the following functions: for $u \in \{0 \} \cup [n]$ and $A \subseteq [n]$ let

$$f(A,u):=\frac{|A \cap [u]|}{p} \ \ \  \textrm{and} \ \ g(A,u):=\frac{n-u-|([n]\setminus[u])\cap A|}{q},$$

with $|A \cap [0]|=0$. Observe that if $|A|\not = 0$, then we have

\begin{equation}
\label{eq:11}
 \ \ 0=f(A,0)< g(A,0)=\frac{n-|A|}{q}  \ \ \textrm{and} \ \
\frac{|A|}{p}=f(A,n) > g(A,n)=0.
\end{equation}

Also note that for all $i \in [n]$ if

\vspace{3mm}

$\bullet_1$ $i \in A$, then

$$f(A,i-1)+\frac{1}{p}=f(A,i) \ \ \textrm{and} \ \  g(A,i-1)=g(A,i)$$

$\bullet_2$ $i \not \in A$, then

$$f(A,i-1)=f(A,i) \ \ \textrm{and} \ \ g(A,i-1)-\frac{1}{q}=g(A,i).$$

By $\bullet_1$,$\bullet_2$ and $(\ref{eq:11})$ we have $f(A,0)<g(A,0)$ and going towards $n$, $f$ is increasing, $g$ is decreasing, but both of them changes with at most $\frac{1}{p}$ and we have $f(A,n)>g(A,n)$.

We are done with the proof of \lref{cutpoint}.

\end{proof}

\vspace{5mm}

\subsection{Using the permutation method}

\
Let us introduce two pieces of notation:

\vspace{3mm}

1) for all $F \in \cF$ choose a $(p,q)$-cut point $x_F$ (we can do it by \lref{cutpoint}), and let $$\cF_x:=\{F \in \cF: x=x_F\} \ \ \textrm{for} \ \ x \in [n],$$

\vspace{2mm}

2) for $x+k \le n$ let $j(x,k):= \lfloor \frac{p}{q}(n-x-k) \rfloor$.

\vspace{4mm}

Note that if $x$ is a $(p,q)$-cut point for $A \subseteq [n]$, then

$$|A\cap[x]|=j(x,|([n]\setminus[x])\cap A|).$$

\vspace{3mm}

In this section we will prove an upper bound on $|\cF_x|$ using the permutation method.

\vspace{1mm}

Let us consider the following permutation group of $[n]$: for any $x\in[n]$ let us denote by $S_x$ the symmetric group on $x$ elements, and let $\Pi_x:=S_x \times S_{n-x}$, the direct product of $S_x$ and $S_{n-x}$ (for definition of direct product of groups see e.g. \cite{La}). An element $(\pi_1,\pi_2)=\pi \in \Pi_x$ acts on $[n]$ the following way:

$$\pi(i) = \left\{ \begin{array}{lll}
\pi_1(i) & \textrm{if} \ \ i \le x, \\
\pi_2(i-x)+x &  \textrm{if} \ \ i>x.
\end{array} \right.$$

\vspace{2mm}

For $A \subseteq [n]$ and $\pi \in \Pi_x$ we will use the notation $\pi(A)$ for $\{\pi(a) : a \in A\}$.

\vspace{3mm}

Let us define the following families of sets for $x \in [n]$, $0 \le k \le n-x$ if $j(x,k)<x$:

$$C(x,k):=\{1,2,...,j(x,k), x+1, x+2,..., x+k\}.$$

\vspace{5mm}

Observe two things:

\vspace{3mm}

$\circ_1$ For any $x \in [n]$ and $r < q$ we have

$$|\{C(x,tq+r): 0 \le t \le \frac{n}{q}\} \cap \cF| \le 1$$

by the assumptions that $\cF$ is a ($p,q$)-tilted Sperner family with patterns and two such sets for different $t'$s are forbidden. Note here that $C(x,tq+r)$ does not even exist for some $t$. We also have that for all $\pi \in \Pi_x$

$$|\{\pi(C(x,tq+r)): 0 \le t \le \frac{n}{q}\} \cap \cF_x| \le 1.$$

Indeed, if $F$ and $G$ are both in this family, it is easy to calculate that $p|F \setminus G|=q|G \setminus F|$, and elements of $F \setminus G$ are smaller than $x$ while elements of $G \setminus F$ are larger than $x$.

\vspace{2mm}

$\circ_2$ For any $F\in \cF_x$ there are $k \le n-x$ and $\pi \in \Pi_x$ with

$$F=\pi(C(x,k)).$$

\vspace{3mm}

Now let us do the following computation: fix $x \in [n]$. Using $\circ_1$ we have the following

$$\sum_{\pi \in \Pi_x}\sum_{r=0}^{q-1}\sum_{t=0}^{\lfloor \frac{n}{q}\rfloor}|\pi(C(x,tq+r))\cap \cF_x| \le q(n-x)!x!.$$

After changing the order summations using $\circ_2$ we get

$$\sum_{F \in \cF_x}|F\cap[x]|!(x-|F\cap[x]|)!(|F\setminus[x]|)!(n-x-|F\setminus[x]|)! \le q(n-x)!x!,$$

\vspace{3mm}

and finally, dividing both sides by $(n-x)!x!$ we have

\begin{equation}
\label{eq:bin}
\sum_{F \in \cF_x}\frac{1}{\binom{x}{| F \cap [x]|}\binom{n-x}{|F\setminus [x]|}}\le q.
\end{equation}

\vspace{3mm}

Using the fact that $\binom{x}{i} \le 2^x/\sqrt{x}$, from (\ref{eq:bin}) we have that for all $x \in [n]$:

\begin{equation}
\label{eq:fx}
|\cF_x|\le O(\frac{2^n}{\sqrt{x(n-x)}}).
\end{equation}

\subsection{Finishing the proof of \tref{gv}}

We finish the proof of \tref{gv} by a standard application of the Chernoff-Hoeffding bound (\cite{C}, \cite{H}):

\vspace{3mm}

\textbf{Chernoff-Hoeffding bound:} Let $X_i$ be
independent random variables in the $[0,1]$ interval and let $$X(n):= \sum_{i=1}^n X_i.$$ Then for $t \leq \mathbb{E}[X(n)]$ we have $$\mathbb{P}(|X(n)-\mathbb{E}[X(n)]| \geq t) \leq 2 \exp ( - \frac{2t^2}{n}).$$

The next lemma is probably well known, however for the sake of completeness we present a proof here. Let

$$\cG:=\{G \subseteq [n]  : \textrm{there is } x \in [n] \ \textrm{with} \ \big{|}|[x] \cap G|-\frac{x}{2}\big{|} > \sqrt{n \log n}\}.$$

\vspace{3mm}

\begin{lemma}
\label{lem:cherno}

We have $$|\cG| \le O(\frac{2^n}{n}).$$

\end{lemma}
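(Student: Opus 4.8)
The plan is to read $|\cG|/2^n$ as a probability and control it by the stated Chernoff--Hoeffding bound. First I would choose $G \subseteq [n]$ uniformly at random; equivalently, each $i \in [n]$ lies in $G$ independently with probability $1/2$. Setting $X_i := \mathbf{1}[i \in G]$ gives independent $\{0,1\}$-valued (hence $[0,1]$-valued) random variables, and the prefix sum $X(x) := \sum_{i=1}^x X_i = |[x] \cap G|$ has $\mathbb{E}[X(x)] = x/2$. Under this dictionary $|\cG| = 2^n \cdot \mathbb{P}(G \in \cG)$, so it suffices to prove $\mathbb{P}(G \in \cG) \le O(1/n)$, i.e. that with high probability no prefix $[x]$ witnesses a deviation exceeding $\sqrt{n \log n}$.

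Next I would bound, for each fixed $x \in [n]$, the probability $\mathbb{P}\big(|X(x) - x/2| > \sqrt{n \log n}\big)$, splitting into two regimes. If $x < 2\sqrt{n \log n}$, then since $0 \le X(x) \le x$ we automatically have $|X(x) - x/2| \le x/2 < \sqrt{n \log n}$, so the bad event is impossible and contributes nothing. If instead $x \ge 2\sqrt{n \log n}$, then $t := \sqrt{n \log n} \le x/2 = \mathbb{E}[X(x)]$, so the Chernoff--Hoeffding bound applies to the prefix $X_1, \dots, X_x$ and yields
$$\mathbb{P}\big(|X(x) - x/2| \ge \sqrt{n \log n}\big) \le 2\exp\!\left(-\frac{2 n \log n}{x}\right) \le 2\exp(-2\log n) = \frac{2}{n^2},$$
where the last inequality uses $x \le n$.

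Finally, a union bound over the at most $n$ values of $x$ gives $\mathbb{P}(G \in \cG) \le n \cdot \tfrac{2}{n^2} = \tfrac{2}{n}$, and therefore $|\cG| \le \tfrac{2^{n+1}}{n} = O(2^n/n)$, as claimed. The only place requiring any care, and hence the closest thing to an obstacle, is matching the hypothesis $t \le \mathbb{E}[X(x)]$ of the stated inequality to the admissible range of $x$; the small-$x$ observation dispenses with this cleanly, and the rest is a direct substitution together with a union bound, so I do not anticipate a genuine difficulty.
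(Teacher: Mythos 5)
Your proof is correct and follows essentially the same route as the paper: decompose $\cG$ over the witness point $x$, apply the Chernoff--Hoeffding bound with $t=\sqrt{n\log n}$ to each prefix (you phrase it probabilistically, the paper counts binomial coefficients, but these are identical), use $x\le n$ to get the $2/n^2$ bound, and finish with a union bound over $x$. In fact you are slightly more careful than the paper: the paper only verifies $t\le n/2$, which does not by itself give the hypothesis $t\le\mathbb{E}[X(x)]=x/2$ for small $x$, whereas your observation that the deviation event is vacuous when $x<2\sqrt{n\log n}$ closes that gap cleanly.
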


\begin{proof}

Note that $\cG=\cup_{x \in [n]} \cG_x$, where $$\cG_x:=\{G \in \cG : \Big{|}|[x] \cap G|-\frac{x}{2}\Big{|} > \sqrt{n \log n}\}.$$

Observe that

\begin{equation}
\label{eq:cher1}
|\cG_x| \frac{1}{2^n} \le \big{(} \sum_{y=0}^{\lfloor \frac{x}{2}-\sqrt{n\log n}\rfloor}\binom{x}{y} + \sum_{y=\lceil \frac{x}{2}+\sqrt{n\log n} \rceil}^{x} \binom{x}{y} \big{)}\frac{1}{2^x}
\end{equation}

Applying the Chernoff-Hoeffding bound on the right hand side of (\ref{eq:cher1}) with $t=\sqrt{n \log n}$ \newline $(\textrm{which is less than } \frac{n}{2} \textrm{ for } n \geq 10 )$ we have

\begin{equation}
\label{eq:cher}
|\cG_x| \frac{1}{2^n} \le 2 exp(-\frac{2 n \log n}{x}).
\end{equation}

Using $x \le n$ on the right hand side of (\ref{eq:cher}), we have

$$|\cG_x| \le O(\frac{2^{n}}{n^2}),$$

which easily implies the statement of the lemma.

\end{proof}

\vspace{4mm}

Let $\cF':=\cF \setminus \cG$. \vspace{3mm} \newline Using \lref{cherno} we prove that a $(p,q)$-cut point of any $F \in \cF'$ is in a $O(\sqrt{n \log n})$ neighborhood of $\frac{p}{p+q}n$.

\begin{lemma}
\label{lem:cher}

For $n \geq 2$ and all $F \in \cF'$ we have $$|x_F -\frac{p}{p+q}n| \le 8\sqrt{n \log n}.$$

\end{lemma}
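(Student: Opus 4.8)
The plan is to combine the definition of a $(p,q)$-cut point with the assumption that $F \notin \cG$, which controls how balanced the prefixes of $F$ are. Recall that $x_F$ being a $(p,q)$-cut point means, by the remark following the definition and the identity noted in Section~2.2, that $|F\cap[x_F]|$ and $|([n]\setminus[x_F])\cap F|$ satisfy the approximate relation $\frac{p}{q}\cdot(\text{number of non-elements of }F\text{ above }x_F)\approx(\text{number of elements of }F\text{ below }x_F)$. Writing $x=x_F$ and rearranging inequality~(\ref{eq:pq}), I would extract the two-sided bound
$$
\Bigl|\,q\,|F\cap[x]| - p\bigl(n-x-|([n]\setminus[x])\cap F|\bigr)\,\Bigr| \le q.
$$

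Next I would substitute the balance information coming from $F\in\cF'=\cF\setminus\cG$. Since $F\notin\cG$, for \emph{every} $y\in[n]$ we have $\bigl||[y]\cap F|-\tfrac{y}{2}\bigr|\le\sqrt{n\log n}$; applying this at $y=x$ gives $|F\cap[x]| = \tfrac{x}{2}+O(\sqrt{n\log n})$, and applying it at $y=n$ together with the bound at $y=x$ controls the number of elements of $F$ above $x$, hence also $|([n]\setminus[x])\cap F| = \tfrac{n-x}{2}+O(\sqrt{n\log n})$. Plugging these into the rearranged cut-point inequality, the left-hand side becomes
$$
q\Bigl(\tfrac{x}{2}+O(\sqrt{n\log n})\Bigr) - p\Bigl((n-x) - \tfrac{n-x}{2}+O(\sqrt{n\log n})\Bigr) = \tfrac{q x}{2}-\tfrac{p(n-x)}{2}+O(\sqrt{n\log n}),
$$
and this must be bounded in absolute value by $q=O(1)$.

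Finally I would solve the resulting scalar inequality for $x$. Setting $\tfrac{qx}{2}-\tfrac{p(n-x)}{2}=O(\sqrt{n\log n})$ and collecting the coefficient of $x$, namely $\tfrac{p+q}{2}$, I obtain $\tfrac{p+q}{2}\bigl(x-\tfrac{p}{p+q}n\bigr)=O(\sqrt{n\log n})$, so that $|x_F-\tfrac{p}{p+q}n| = O\!\bigl(\tfrac{1}{p+q}\sqrt{n\log n}\bigr)$. Tracking the constants carefully (the error terms are at most a couple of copies of $\sqrt{n\log n}$, divided by $(p+q)/2\ge 1$) yields the stated bound $8\sqrt{n\log n}$, which is a comfortably loose choice of constant.

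The main obstacle I anticipate is bookkeeping of the constants rather than any conceptual difficulty: one must be careful that the $O(\sqrt{n\log n})$ absorbs both the $\pm\sqrt{n\log n}$ deviations (appearing with coefficients $p$ and $q$) and the additive $q$ from the cut-point inequality, and that dividing by $\tfrac{p+q}{2}$ does not inflate the constant beyond $8$. Since $p,q\ge 1$ the factor $\tfrac{p+q}{2}$ only helps, so the worst case is $p=q=1$; checking that case explicitly confirms the constant $8$ is safe.
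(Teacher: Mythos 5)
Your proof is correct and takes essentially the same route as the paper's: both extract a two-sided bound from the cut-point inequality (\ref{eq:pq}), feed in the prefix-balance estimates at $y=x_F$ and $y=n$ that follow from $F \notin \cG$, and solve the resulting linear inequality for $x_F$. One small quibble on your constant-tracking remark: the worst case is not $p=q=1$ (where your bound is about $2\sqrt{n\log n}$) but rather $q/p$ large, where the effective factor $\frac{4q}{p+q}$ tends to $4$ --- still comfortably below $8$, and in fact your explicit final algebra is tidier than the paper's, which stops at an intermediate displayed inequality.
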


\begin{proof}

By the fact that $F \in \cF'$ we have both

\begin{equation}
\label{eq:end1}
 \Big{|}|[x_F] \cap F|- \lfloor \frac{x_F}{2} \rfloor \Big{|} \le \sqrt{n \log n}
\end{equation}

and

\begin{equation}
\label{eq:end2}
\Big{|}|[n] \cap F|- \lfloor \frac{n}{2} \rfloor \Big{|} \le \sqrt{n \log n}.
\end{equation}

By (\ref{eq:end1}) and (\ref{eq:end2}) we have (loosing at most 1 in putting together two inequalities and using that $1 \leq \sqrt{n \log n}$ for $n \geq 2$.)

\begin{equation}
\label{eq:end3}
\Big{|}|([n] \setminus [x_F]) \cap F|-\lfloor \frac{n-x_F}{2} \rfloor\Big{|} \le 4\sqrt{n \log n}.
\end{equation}

However $x_F$ is a $(p,q)$-cut point for $F$, so by (\ref{eq:end1}), (\ref{eq:end2}) and (\ref{eq:end3}) we have

$$\Big{|}(n-x_F -\lfloor \frac{n - x_F}{2} \rfloor)\frac{1}{q}- \lfloor \frac{x_F}{2}\rfloor \frac{1}{p} \Big{|} \le 8\sqrt{n \log n},$$

and we are done with \lref{cher}.

\end{proof}

By (\ref{eq:fx}) and \lref{cher} we have $$|\cF'| \le O(\sqrt{n \log n} \ \frac{2^n}{n}),$$

and by \lref{cherno} we are done with the proof of \tref{gv}.

\end{proof}

\section{Concluding remarks}

\

We proved in \tref{gv} that the cardinality of a $(p,q)$-tilted Sperner family with patterns on $[n]$ is $O(\sqrt{\log n} \ \frac{2^n}{\sqrt{n}})$, however we do not have much better constructions than the ones in \cite{LL}. We conjecture that for different $p$ and $q$ the order of a maximal size $(p,q)$-tilted Sperner family with patterns on $[n]$ is $\Theta(\frac{2^n}{\sqrt{n}})$.

For $p=q$ we are not able to give really good constructions, we only know that the $(0,0)$-tilted Sperner family with patterns on $[n]$ (which we define just with property $(ii)$ in \dref{2}) is $O(\frac{2^n}{n})$, and we do not know what should be the right order.

It is worth mentioning that the whole topic from a more general viewpoint is investigated in the recent paper \cite{KL}.

\section{Acknowledgment}

The authors would like to thank Zheijang Normal University, China - where they started to work on this problem - for their hospitality. They are also indebted to the anonymous referees for providing insightful comments which increased the level of presentation of the paper.

\end{document}